\DeclareSymbolFontAlphabet{\mathbb}{AMSb}
\DeclareSymbolFontAlphabet{\mathbbl}{bbold}
\newcommand{\eg}{e.g.}
\newcommand{\ie}{i.e.}			
\newcommand{\etal}{et al.}
\newcommand{\eps}{\varepsilon}
\newcommand{\T}{\mathbb{T}}
\newcommand{\R}{\mathbb{R}}
\newcommand{\transpose}[1]{{#1}^\top}
\newcommand{\Pcal}{\mathcal{P}}
\newcommand{\Qcal}{\mathcal{Q}}
\newcommand{\Ccal}{\mathcal{C}}
\newcommand{\Scal}{\mathcal{S}}
\newcommand{\Kcal}{\mathcal{K}}
\newcommand{\K}{\mathbb{K}}
\newcommand{\LP}{\mathbf{LP}}
\DeclareMathOperator{\val}{\mathrm{val}}
\DeclareMathOperator{\interior}{\mathrm{int}}
\DeclareMathOperator{\vol}{\mathrm{vol}}
\DeclareMathOperator{\rec}{\mathrm{rec}}
\newtheorem{theorem}{Theorem}
\newtheorem*{theorem*}{Theorem}
\newtheorem{proposition}[theorem]{Proposition}
\newtheorem{corollary}[theorem]{Corollary}
\newtheorem{lemma}[theorem]{Lemma}
\newtheorem{assumption}{Assumption}
\theoremstyle{remark}
\newcommand{\scalar}[2]{\langle#1,#2\rangle}
\newcommand{\tscalar}[2]{\langle #1, #2 \rangle_\T}
\newcounter{tikzbrace}
\newcommand{\tikzmark}[1]{\tikz[overlay,remember picture] {\node (brace-\thetikzbrace) {};}\stepcounter{tikzbrace}}
\newcommand{\insertbigbrace}[1]{%
\begin{tikzpicture}[remember picture, overlay]
\draw[thick] 
	let \n1 = {\thetikzbrace - 2},
		\n2 = {\thetikzbrace - 1},   
		\p1 = (brace-\n1),
		\p2 = (brace-\n2),
		\n3 = {max(\x1,\x2)},
		\p3 = ($(\n3,\y1) + (0.2,0.4)$),
		\p4 = ($(\n3,\y2) + (0.2,-0.2)$),
		\p5 = (-0.1,0) in 
	(\p3) ++ (\p5) -- (\p3) -- node[right=1ex]{#1} (\p4) -- ++ (\p5);
\end{tikzpicture}}
\newlength{\mytemplen}
\title{The tropicalization of the entropic barrier}
\author{Xavier Allamigeon
\and Abdellah Aznag
\and Stéphane Gaubert
\and Yassine Hamdi} 
\date{\today}
\address[Xavier Allamigeon \and Stéphane Gaubert]{INRIA and CMAP, CNRS, \'Ecole Polytechnique, Institut Polytechnique de Paris, 91120 Palaiseau, France}
\email{firstname.lastname@inria.fr}
\address[Abdellah Aznag]{Industrial Engineering and Operations Research, Columbia University, New York, NY}
\email{aa4693@columbia.edu}
\address[Yassine Hamdi]{\'Ecole Polytechnique, Institut Polytechnique de Paris, 91120 Palaiseau, France} \email{yassine.hamdi@polytechnique.edu}
\thanks{This work was partially done when the second author was with Ecole Polytechnique, Institut Polytechnique de Paris.}
\begin{document}

\begin{abstract}
The entropic barrier, studied by Bubeck and Eldan
(Proc.\ Mach.\ Learn.\ Research, 2015), is a self-concordant barrier with asymptotically optimal self-concordance parameter. In this paper, we study the tropicalization of the central path associated with the entropic barrier, \ie, the logarithmic limit of this central path for a parametric family of linear programs defined over the field of Puiseux series. Our main result is that the tropicalization of the entropic central path is a piecewise linear curve which coincides with the tropicalization of the logarithmic central path studied by Allamigeon et al.\ (SIAM J.\ Applied Alg.\ Geom., 2018). One consequence is that the number of linear pieces in the tropical entropic central path can be exponential in the dimension and the number of inequalities defining the linear program.
\end{abstract}

\maketitle

\section{Introduction}

The \emph{entropic barrier} on a convex body $\Kcal \subset \R^n$ is defined as the Cramér transform of the characteristic function of $\Kcal$. This means that it is the Fenchel conjugate $f^*_\Kcal$ of the logarithmic Laplace transform $f_\Kcal$ of the characteristic function of $\Kcal$, \ie,
\[
f_\Kcal(\theta) \coloneqq \log \int_\Kcal e^{\scalar{\theta}{x}} dx \, .
\]
The interest for the entropic barrier comes from the following theorem established by Bubeck and Eldan, which states that $f^*_\Kcal$ is a self-concordant barrier whose self-concordance parameter is asymptotically optimal:
\begin{theorem*}[\cite{BubeckEldan15}]
Let $\Kcal \subset \R^n$ be a convex body. The entropic barrier is a $(1+\eps_n) n$-self-concordant barrier on $\Kcal$, with $\eps_n \leq 100 \sqrt{\log(n)/n}$, for any $n \geq 80$. 
\end{theorem*}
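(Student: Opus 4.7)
The plan is to exploit that $f_\Kcal$ is the log-partition function of the exponential family $p_\theta(x) \propto e^{\scalar{\theta}{x}} \1_\Kcal(x)$, so its successive derivatives compute the cumulants of this family:
\[
\nabla f_\Kcal(\theta) = \mathbb{E}_\theta[X], \quad \nabla^2 f_\Kcal(\theta) = \operatorname{Cov}_\theta(X),
\]
and $D^3 f_\Kcal(\theta)$ is the third-cumulant tensor. Legendre duality then yields, at $x = \mathbb{E}_\theta[X]$,
\[
\nabla f^*_\Kcal(x) = \theta, \qquad \nabla^2 f^*_\Kcal(x) = \operatorname{Cov}_\theta(X)^{-1}.
\]

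First I would establish self-concordance of $f^*_\Kcal$. Since self-concordance is preserved by Legendre conjugation, it suffices to prove, for every direction $u$,
\[
\bigl|D^3 f_\Kcal(\theta)[u,u,u]\bigr| \leq 2 \bigl(u^\top \operatorname{Cov}_\theta(X)\, u\bigr)^{3/2}.
\]
The left-hand side is the third cumulant of the scalar random variable $\scalar{u}{X}$ under $p_\theta$, whose density is one-dimensional and log-concave as a pushforward of the log-concave measure $p_\theta$ on $\Kcal$. The required inequality $|\kappa_3| \leq 2\kappa_2^{3/2}$ is a classical estimate for log-concave densities on $\R$, saturated by the exponential distribution.

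Second I would bound the self-concordance parameter. The duality formulas give
\[
\nu \;=\; \sup_x \scalar{\nabla f^*_\Kcal(x)}{[\nabla^2 f^*_\Kcal(x)]^{-1} \nabla f^*_\Kcal(x)} \;=\; \sup_\theta \operatorname{Var}_\theta\bigl(\scalar{\theta}{X}\bigr).
\]
Setting $g(t) \coloneqq f_\Kcal(t\theta)$, this supremum equals $\sup_\theta g''(1)$. Since $\Kcal$ is bounded, $g$ is convex with $g(t) = t\, \max_{x \in \Kcal}\scalar{\theta}{x} + O(\log t)$ as $t \to \infty$, and one needs to extract from this asymptotic an upper bound of the form $(1+\eps_n) n$ on $g''(1)$ uniformly in $\theta$ and $\Kcal$.

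The main obstacle is obtaining the sharp leading constant $n$, as opposed to the easier $Cn$ with some $C>1$. A direct one-dimensional estimate for the variance of a log-concave density on an interval treats $\Kcal$ as if it were a segment and loses a dimension-dependent factor. Bubeck and Eldan close this gap by coupling a precise asymptotic expansion of the logarithmic Laplace transform with concentration-of-measure results for high-dimensional log-concave densities, applied after a suitable rescaling of $p_\theta$; the correction $\eps_n = O(\sqrt{\log(n)/n})$ arises from the quantitative rate in these concentration estimates, and is what permits the \emph{asymptotic} optimality (matching the lower bound $\nu \geq n$ for the simplex).
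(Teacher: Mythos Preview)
The paper does not contain a proof of this statement: it is quoted verbatim from~\cite{BubeckEldan15} as background motivation and is never argued in the text. There is therefore no ``paper's own proof'' to compare your proposal against.

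For what it is worth, your outline is a faithful sketch of the original Bubeck--Eldan argument: the self-concordance of $f^*_\Kcal$ is indeed deduced from the third-cumulant inequality $|\kappa_3| \leq 2\kappa_2^{3/2}$ for one-dimensional log-concave laws, and the barrier parameter is identified with $\sup_\theta \theta^\top \operatorname{Cov}_\theta(X)\,\theta$. You also correctly flag that the delicate part is getting the leading constant down to $1$, which in~\cite{BubeckEldan15} relies on thin-shell and related log-concave concentration estimates and produces the $O(\sqrt{\log(n)/n})$ error term. Your proposal is a high-level summary rather than a proof (the variance bound is asserted, not carried out), but since the present paper never attempts this step either, there is no discrepancy to report.
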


Self-concordant barriers are the cornerstone of path-following interior point methods. In more detail, consider a linear program of the form
\begin{equation}\label{eq:LP}
\text{minimize} \enspace \scalar{c}{x} \enspace \text{subject to} \enspace x \in \Pcal \, .
\end{equation}
where $\Pcal \subset \R^n$ is a convex polyhedron and $c \in \R^n$. Every self-concordant barrier $\phi$ over $\Pcal$ gives rise to a function called the \emph{central path}, which maps any positive real number $\mu$ to the (unique) optimal solution of the convex program
\begin{equation}\label{eq:central_path}
\text{minimize} \enspace \scalar{c}{x} + \mu \phi(x) \enspace \text{subject to} \enspace x \in \interior \Pcal \, .
\end{equation}
The central path converges towards an optimal solution of~\eqref{eq:LP} when $\mu \to 0^+$, and the basic principle of interior point methods is to follow the central path in an approximate way for decreasing values of the parameter $\mu$ down to $0$. More information can be found in
the monography of Renegar~\cite{Renegar01},
which includes a complete introduction to the topic of path-following interior point methods, and in the book
of Nesterov and Nemirovskii~\cite{NesterovN94}, in which the theory of self-concordant barriers is developed.

The question
of the complexity of path-following interior point methods has received much attention over the last thirty years. One of the main questions, motivated by Smale's ninth problem~\cite{Smale98}, is whether interior point methods can solve linear programming in strongly polynomial complexity. The number of iterations performed by path-following interior point methods is intimately related with the properties of the barrier function and the resulting central path. For example, the number of iterations to get an $\eps$-approximation of the optimal value is bounded by $O(\sqrt \vartheta \log (1 / \eps))$, where $\vartheta$ is the so-called self-concordance parameter of the barrier; see~\cite[Chapter~2.3]{Renegar01} for a definition. This has motivated many works on the improvement of this parameter, \eg, \cite{LeeSidford14,BubeckEldan15,LeeYue18,LeeSidford19}. In a recent work~\cite{ABGJ18}, Allamigeon~\etal{}\ have shown that interior point methods based on the logarithmic barrier are not strongly polynomial, by studying the ``tropicalization'' of the associated  central path. The tropical central path is defined as a log-limit of the central path of a parametric family of linear programs; it follows from results of tropical geometry that it is a piecewise-linear curve. It is shown in~\cite{ABGJ18} that the number of ``linear pieces'' of the tropical central path can be exponential in the dimension and the number of inequalities defining these linear programs, and that this entails that log-barrier interior point methods can make a number of iterations that is exponential in the same quantities. It remained an open question whether other barriers could be studied in this framework. 

In this short note, we solve this problem in the case of the entropic barrier, by establishing a characterization of the tropicalization of the associated central path. We denote by $\Ccal_{\Pcal, c}(\mu)$ the point of the entropic central path, \ie~the unique optimal solution of~\eqref{eq:central_path} when $\phi = f^*_\Pcal$. 

As in~\cite{ABGJ18}, our approach makes use of a real-closed and nonarchimedean field of convergent series, here the field $\K$ of \emph{absolutely convergent generalized real Puiseux series}. The latter consists of series of the form $\sum_{\alpha \in \R} c_\alpha t^\alpha$ such that:
\begin{inparaenum}[(i)]
\item either the support $\{ \alpha \in \R \colon c_\alpha \neq 0\}$ is finite or $-\infty$ is its unique accumulation point;
\item the coefficients $c_\alpha$ are real, and the series is absolutely convergent for any sufficiently large values of $t \in \R$ (which we denote $t \gg 1$).
\end{inparaenum}
The \emph{valuation} of an element $\bm z \in \K$, denoted by $\val \bm z$, is defined as the greatest element $\alpha$ in the support of $\bm z$ (or $-\infty$ if $\bm z$ is the zero series). Equivalently, the valuation of $\bm z$ corresponds to the limit when $t \to +\infty$ of $\log_t |\bm z(t)|$, where $\log_t (\cdot) \coloneqq \frac{1}{\log t}\log(\cdot)$ is the logarithm map in base $t$. 

In this setting, any polyhedron $\bm \Pcal = \{ \bm x \in \K^n \colon \bm A \bm x \leq \bm b \}$ (where $\bm A \in \K^{m \times n}$ and $\bm b \in \K^m$) and objective vector $\bm c \in \K^n$ defined by entries over the field of Puiseux series give rise to a parametric family $\LP(t)$ of linear programs
\begin{equation}
\text{minimize} \enspace \scalar{\bm c(t)}{x} \enspace \text{subject to} \enspace x \in \bm \Pcal(t) \, , \label{eq:param_LP}
\end{equation}
where $\bm \Pcal(t)$ is defined as the polyhedron $\{ x \in \R^n \colon \bm A (t) x \leq \bm b(t)\}$ for all $t \gg 1$. The main result of the paper is the following characterization of the tropicalization (\ie, the log-limit) of their entropic central paths~$\Ccal_{\bm \Pcal(t), \bm c(t)}$:
\begin{theorem}\label{th:main}
Suppose that Assumption~\ref{assump} holds (see Section~\ref{sec:main}). For all $\bm \mu \in \K_{> 0}$, the following limit
\begin{align}
\lim_{t \to +\infty} \log_t \Ccal_{\bm \Pcal(t), \bm c(t)}(\bm \mu (t)) \label{eq:limit} && \text{(where $\log_t(\cdot)$ is understood entrywise)}
\end{align}
exists and is equal to the tropical barycenter of the tropical polytope $\{ x \in \val \bm \Pcal \colon \tscalar{\val \bm c}{x} \leq \val \bm \mu \}$, where $\tscalar{x}{y} \coloneqq \max_i (x_i + y_i)$. 
\end{theorem}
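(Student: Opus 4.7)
My plan is to combine a classical Gibbs-mean representation of the entropic central path with a Laplace-type asymptotic analysis over the Puiseux field $\K$.

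First, I would recall that for any convex body $\Pcal \subset \R^n$, the entropic central path point $\Ccal_{\Pcal, c}(\mu)$ coincides with the mean of the Boltzmann distribution with potential $-\scalar{c}{\cdot}/\mu$ on $\Pcal$. This is a direct consequence of Fenchel duality: the first-order optimality condition for~\eqref{eq:central_path} with $\phi = f^*_\Pcal$ rewrites as $\Ccal_{\Pcal,c}(\mu) = \nabla f_\Pcal(-c/\mu)$, and differentiating under the integral in the definition of $f_\Pcal$ identifies $\nabla f_\Pcal(\theta)$ with the expectation of $X$ distributed with density proportional to $e^{\scalar{\theta}{x}}$ on $\Pcal$. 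Applying this to the parametric family yields
\[
\Ccal_{\bm\Pcal(t),\bm c(t)}(\bm\mu(t)) \;=\; \frac{\int_{\bm\Pcal(t)} x\, e^{-\scalar{\bm c(t)}{x}/\bm\mu(t)}\, dx}{\int_{\bm\Pcal(t)} e^{-\scalar{\bm c(t)}{x}/\bm\mu(t)}\, dx} \,.
\]

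The next step is a Laplace-type concentration argument. Setting $\bm\Qcal(t) \coloneqq \{x \in \bm\Pcal(t) : \scalar{\bm c(t)}{x} \leq \bm\mu(t)\}$, I would show that as $t \to +\infty$ the Boltzmann density behaves in the $\log_t$-sense as the indicator $\mathbf{1}_{\bm\Qcal(t)}$, up to a multiplicative factor confined to a bounded range of $\R_{>0}$. Specifically, on $\bm\Qcal(t)$ the ratio $\scalar{\bm c(t)}{x}/\bm\mu(t)$ is bounded by constants independent of $t$, while outside $\bm\Qcal(t)$ it grows as $t^{\beta}$ for some $\beta > 0$, so that the corresponding exponential factor is superpolynomially small. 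This reduces both the numerator and the denominator to their counterparts with the Boltzmann weight replaced by $\mathbf{1}_{\bm\Qcal(t)}$, namely the vector moment $\int_{\bm\Qcal(t)} x\, dx$ and the Euclidean volume $\vol \bm\Qcal(t)$; hence $\Ccal_{\bm\Pcal(t),\bm c(t)}(\bm\mu(t))$ and the Euclidean barycenter of $\bm\Qcal(t)$ share the same $\log_t$-limit.

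It then remains to identify this limit with the tropical barycenter of the set $\val\bm\Qcal = \{x \in \val\bm\Pcal : \tscalar{\val\bm c}{x} \leq \val\bm\mu\}$. This is a general fact about tropicalization of volumes and first moments of parametric polytopes over~$\K$: after the change of variables $x_i = t^{y_i}$, one computes $\log_t$ of each integral using a polyhedral decomposition of $\val\bm\Qcal$ together with the Laplace principle $\log_t \int t^{g(y)}\, dy \to \max_y g(y)$, and the $\log_t$-limit of the ratio then matches the intrinsic tropical barycenter.

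I expect the main obstacle to lie in the concentration step: one must carefully control the signs of the entries of $\bm x$ and $\bm c$, and rule out $\scalar{\bm c(t)}{x}$ taking very negative values on a tropically non-negligible portion of $\bm\Pcal(t)$, which would make the Boltzmann factor explode and break the sought concentration on $\bm\Qcal(t)$. Assumption~\ref{assump} should provide precisely the required sign compatibility, typically enforcing positivity of $\bm\Pcal$ and $\bm c$, so that $\scalar{\bm c(t)}{x}$ behaves tropically as $\tscalar{\val\bm c}{\val x}$ and the integrand reduces to a genuine one-sided exponential cutoff at the level $\bm\mu(t)$.
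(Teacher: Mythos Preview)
Your plan follows the same architecture as the paper's proof: represent $\Ccal_{\bm\Pcal(t),\bm c(t)}(\bm\mu(t))$ as the Boltzmann mean (the paper's Proposition~\ref{prop:entropic_cp}), then compute the $\log_t$-limits of numerator and denominator separately (the paper's Lemmas~\ref{lemma:volume} and~\ref{lemma:mean}). Two points, however, need repair.

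First, the concentration statement is wrong as written. The assertion that ``outside $\bm\Qcal(t)$ the ratio $\scalar{\bm c(t)}{x}/\bm\mu(t)$ grows as $t^\beta$ for some $\beta>0$'' is false: points $x$ with $\scalar{\bm c(t)}{x}$ just above $\bm\mu(t)$ give a ratio only slightly larger than $1$, so the Boltzmann weight there is $\Theta(1)$, not superpolynomially small. The paper fixes this by cutting at the inflated level $\bm\mu(t)\,t^\delta$ instead of $\bm\mu(t)$; the tail beyond that level is controlled by an incomplete Gamma bound, the truncated part is trapped in the box $\prod_i [0, t^{x^*_i(\mu+\delta)+\eps}]$, and one finally lets $\delta\to 0^+$ using the continuity of $\alpha\mapsto x^*(\alpha)$. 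Without this margin-and-limit device your reduction to the uniform integrals over $\bm\Qcal(t)$ does not go through.

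Second, your claim that ``this is a general fact about tropicalization of volumes and first moments'' hides the step where Assumption~\ref{assump}\,\eqref{assump:ii} actually does its work. The upper bound for $\log_t\vol\bm\Qcal(t)$ follows from the box inclusion alone, but the matching lower bound requires producing a parallelotope of the form $\{t^{\tilde x}\leq x\leq 2t^{\tilde x}\}$ inside $\bm\Pcal(t)$ with $\tilde x$ arbitrarily close to $x^*(\mu)$. That is exactly where the regularity of $\val\bm\Pcal$ (and Proposition~\ref{prop:lift_interior}) is invoked; it is not a purely sign-related condition, and without it the tropical barycenter can sit on a lower-dimensional face of $\val\bm\Pcal$ whose preimage in $\bm\Pcal(t)$ has zero volume, breaking the lower bound. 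Your proposed change of variables $x_i=t^{y_i}$ and polyhedral-Laplace argument could in principle replace the paper's box estimates, but it would still have to confront this same regularity issue to match the $\liminf$ to the $\limsup$.
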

The definition of tropical polytopes and their barycenters, as well as background on polyhedra over Puiseux series, are given in Section~\ref{sec:prelim}. The rest of the paper is organized as follows. Theorem~\ref{th:main} is proved in Section~\ref{sec:main}. In Section~\ref{sec:cor}, we state that the tropicalization of the entropic central path coincides with that of the logarithmic central path studied in~\cite{ABGJ18}. Then, we adapt the pathological instance of linear programs built in~\cite{ABGJ18} to satisfy Assumption~\ref{assump} and show that their tropical entropic central path has exponentially many linear pieces.

\section{Preliminaries on polyhedra over Puiseux series and their tropicalization}\label{subsec:puiseux} \label{sec:prelim}

We recall that the field $\K$ naturally comes with a total order $\leq$, defined by $\bm x \leq \bm y$ if $\bm x = \bm y$ or if the  coefficient of the leading term (\ie, the term with greatest exponent) in the series $\bm y - \bm x$ is positive. Equivalently, $\bm x \leq \bm y$ if and only if $\bm x(t) \leq \bm y(t)$ for all $t$ large enough. The real-closed character of the field $\K$ is established in~\cite{Dries1998}.

As mentioned in the introduction, we call a \emph{polyhedron over Puiseux series} a set of the form $\bm \Pcal = \{ \bm x \in \K^n \colon \bm A \bm x \leq \bm b \}$ where $\bm A \in \K^{m \times n}$ and $\bm b \in \K^m$ for some $m \in \mathbb{N}$. By the previous discussion, remark that if $\bm x \in \bm \Pcal$, then $\bm x(t) \in \bm \Pcal(t)$ for $t \gg 1$. The following proposition states an analogous property on the inclusion of polyhedra over Puiseux series:
\begin{proposition}\label{prop:inclusion}
Let $\bm \Pcal, \bm \Qcal \subset \K^n$ two polyhedra over Puiseux series such that $\bm \Pcal \subset \bm \Qcal$. Then $\bm \Pcal(t) \subset \bm \Qcal(t)$ for all $t \gg 1$.
\end{proposition}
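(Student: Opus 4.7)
The plan is to combine Farkas' lemma over the real-closed field $\K$ with the elementary fact that each scalar (in)equality in $\K$ transfers to an eventual (in)equality over $\R$. Write $\bm\Pcal = \{\bm x \in \K^n \colon \bm A \bm x \leq \bm b\}$ as in the statement, and fix a representation $\bm\Qcal = \{\bm x \in \K^n \colon \bm C \bm x \leq \bm d\}$ with $\bm C \in \K^{p \times n}$ and $\bm d \in \K^p$. I would first dispose of the case $\bm\Pcal = \emptyset$: Farkas' infeasibility certificate over $\K$ yields $\bm\lambda \in \K^m_{\geq 0}$ with $\transpose{\bm\lambda}\bm A = 0$ and $\transpose{\bm\lambda}\bm b < 0$. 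Each coordinate inequality $\bm\lambda_j \geq 0$ and the strict inequality $\transpose{\bm\lambda}\bm b < 0$ are finitely many scalar relations in $\K$, and by definition of the order on $\K$ each such relation holds pointwise in $\R$ for $t \gg 1$. Consequently $\bm\Pcal(t) = \emptyset$ for all $t$ large enough, and the desired inclusion is trivial.

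If $\bm\Pcal \neq \emptyset$, the hypothesis $\bm\Pcal \subset \bm\Qcal$ means that for every row index $i \in \{1, \dots, p\}$, the inequality $\bm C_i \bm x \leq \bm d_i$ is satisfied on the feasible set of $\bm A \bm x \leq \bm b$ over $\K$. Farkas' lemma over $\K$ then provides a nonnegative multiplier $\bm\lambda_i \in \K^m_{\geq 0}$ such that $\transpose{\bm\lambda_i}\bm A = \bm C_i$ and $\transpose{\bm\lambda_i}\bm b \leq \bm d_i$. Since these consist of finitely many scalar equalities and inequalities in $\K$, they all pass to $\R$ for $t \gg 1$: one gets $\bm\lambda_i(t) \geq 0$, $\transpose{\bm\lambda_i(t)}\bm A(t) = \bm C_i(t)$, and $\transpose{\bm\lambda_i(t)}\bm b(t) \leq \bm d_i(t)$ for all sufficiently large $t$. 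Taking $t$ large enough so that these hold simultaneously for all $i$, any $x \in \bm\Pcal(t)$ then satisfies
\[
\bm C_i(t)\, x \;=\; \transpose{\bm\lambda_i(t)}\, \bm A(t)\, x \;\leq\; \transpose{\bm\lambda_i(t)}\,\bm b(t) \;\leq\; \bm d_i(t)
\]
for each $i$, which gives $x \in \bm\Qcal(t)$ and concludes the argument.

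The main conceptual obstacle is justifying Farkas' lemma over $\K$. Since $\K$ is ordered and real-closed (see~\cite{Dries1998}), one can either invoke the Tarski--Seidenberg transfer principle, noting that Farkas' lemma is a first-order sentence in the language of ordered fields with parameters for the entries of $\bm A, \bm b, \bm C, \bm d$, or observe that the standard algebraic proofs of Farkas (\eg, by Fourier--Motzkin elimination or LP duality) go through verbatim over any real-closed field. Beyond this point, everything else is bookkeeping: only finitely many scalar (in)equalities need to be transferred from $\K$ to $\R$, and taking the maximum of the corresponding thresholds in $t$ yields a single value beyond which the pointwise inclusion holds.
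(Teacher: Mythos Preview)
Your proof is correct and follows essentially the same approach as the paper: reduce to a single defining halfspace of $\bm\Qcal$, invoke Farkas' lemma over the real-closed field $\K$ (justified by transfer from $\R$), and then push the finitely many resulting scalar (in)equalities down to $\R$ for $t \gg 1$. The only cosmetic difference is that the paper folds the empty case into the two-branch formulation of Farkas rather than treating it separately.
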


\begin{proof}
It suffices to show that the result holds when $\bm \Qcal$ is a halfspace, \ie, $\bm \Qcal = \{ \bm x \in \K^n \colon \scalar{\bm c}{\bm x} \leq \bm \alpha \}$ where $\bm c \in \K^n$ and $\bm \alpha \in \K$. Farkas Lemma still holds over the field of $\K$. Indeed, the statement of Farkas Lemma can be written as a first-order sentence in the language of ordered fields. Since this sentence is valid over $\R$, it is valid over any model of the complete theory of real-closed field, including $\K$. Therefore, supposing that $\bm \Pcal = \{ \bm x \in \K^n \colon \bm A \bm x \leq \bm b \}$ with $\bm A \in \K^{m \times n}$ and $\bm b \in \K^m$, there exists $\bm \lambda \in \K_{\geq 0}^m$ such that:
\begin{enumerate}[(i)]
\item $\transpose{\bm A} \bm \lambda = 0$ and $\scalar{\bm b}{\bm \lambda} < 0$;
\item or $\transpose{\bm A} \bm \lambda = \bm c$ and $\scalar{\bm b}{\bm \lambda} \leq \bm \alpha$.
\end{enumerate}
Consequently, the previous property holds by replacing $\bm A$, $\bm b$, $\bm c$, $\bm \lambda$ and $\bm \alpha$ by $\bm A(t)$, $\bm b(t)$, $\bm c(t)$, $\bm \lambda(t)$ and $\bm \alpha(t)$ respectively, as soon as $t \gg 1$. We deduce that $\bm \Pcal(t) \subset \bm \Qcal(t)$ for such $t$.
\end{proof}

The valuation map constitutes a monotone homomorphism from the semifield $\K_{\geq 0}$ of nonnegative Puiseux series to the \emph{tropical (max-plus) semifield} $\T \coloneqq (\R \cup \{-\infty\}, \vee, +)$, where we denote $x \vee y \coloneqq \max(x,y)$. In consequence, the valuation of the scalar product $\scalar{\bm x}{\bm y} = \sum_{i = 1}^n \bm x_i \bm y_i$ of two vectors $\bm x, \bm y \in \K_{\geq 0}^n$ is given by $\tscalar{\val \bm x}{\val \bm y} = \bigvee_{i = 1}^n (\val \bm x_i + \val \bm y_i)$, where we extend the valuation to elements of $\K^n$ entrywise.

A \emph{tropical polyhedron} is a set of points $x \in \T^n$ satisfying finitely many inequalities of the form $\alpha_0 \vee (\alpha_1 + x_1) \vee \dots \vee (\alpha_n + x_n) \leq  \beta_0 \vee (\beta_1 + x_1) \vee \dots \vee (\beta_n + x_n) $, where $\alpha_i, \beta_i \in \T$. (Note that the latter are the analogues of linear (affine) inequalities in the tropical semifield.) As shown in~\cite[Proposition~7]{ABGJ18} and~\cite[Proposition~2.6]{ABGJ15}, tropical polyhedra are precisely the images under the valuation map of polyhedra over Puiseux series which are included in the nonnegative orthant $\K_{\geq 0}^n$. One of the basic properties of tropical polyhedra is that they are tropically convex. A set $\Scal \subset \T^n$ is said to be \emph{tropically convex} if for all $x, y \in \Scal$ and $\lambda, \mu \in \T$ such that $\lambda \vee \mu = 0$, the entrywise supremum $(\lambda \mathbbl{1} + x) \vee (\mu \mathbbl{1} + y)$ belongs to $\Scal$ (where we denote by $\mathbbl{1}$ the all-$1$ $n$-vector). As a consequence, every compact tropical convex set in $\T^n$ admits a greatest element (for the entrywise ordering over $\T^n$), which is called the \emph{tropical barycenter}. A \emph{tropical polytope} is a compact tropical polyhedron.

\begin{proposition}\label{prop:lift_interior}
Let $\bm \Pcal \subset \K_{\geq 0}^n$ a polyhedron over Puiseux series. If $x$ lies in the interior of $\val \bm \Pcal$, then every $\bm x \in \K_{\geq 0}^n$ satisfying $\val \bm x = x$ belongs to $\bm \Pcal$. 
\end{proposition}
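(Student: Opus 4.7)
The plan is to reduce the inclusion $\bm x \in \bm\Pcal$ to checking each defining inequality of $\bm\Pcal$ individually, after rewriting each one so that both sides involve only nonnegative Puiseux series. Write $\bm\Pcal = \{\bm y \in \K^n : \bm A \bm y \leq \bm b\}$. For each row $i$, split $\{1,\dots,n\}$ into $J_i^+ = \{j : \bm A_{ij} > 0\}$ and $J_i^- = \{j : \bm A_{ij} < 0\}$ (indices with $\bm A_{ij} = 0$ are discarded), and decompose $\bm b_i = \bm b_i^+ - \bm b_i^-$ with $\bm b_i^+, \bm b_i^- \geq 0$ and at most one of them nonzero. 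The $i$-th inequality of $\bm\Pcal$ then becomes
\[
\bm\ell_i(\bm y) \coloneqq \bm b_i^- + \sum_{j \in J_i^+} \bm A_{ij}\bm y_j \;\leq\; \bm b_i^+ + \sum_{j \in J_i^-}(-\bm A_{ij})\bm y_j \eqqcolon \bm r_i(\bm y),
\]
and for $\bm y \in \K_{\geq 0}^n$ both sides are sums of nonnegative Puiseux series.

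Since the valuation of a sum of nonnegative series is the maximum of the valuations, $\val\bm\ell_i(\bm y) = L_i(\val \bm y)$ and $\val\bm r_i(\bm y) = R_i(\val\bm y)$, where $L_i, R_i \colon \T^n \to \T$ are tropical affine forms whose coefficients are the valuations of the $\bm A_{ij}$ and of $\bm b_i^{\pm}$. For every $y \in \val\bm\Pcal$, lifting to any $\bm y \in \bm\Pcal$ with $\val\bm y = y$ yields $L_i(y) \leq R_i(y)$. The key sub-claim is that if $y \in \interior\val\bm\Pcal$, this inequality is in fact \emph{strict}. I would argue by contradiction: suppose $L_i(y) = R_i(y) = M$, locate the set of indices attaining the max on each side, and perturb $y$ in a direction that either raises $L_i$ strictly above $M$ (by increasing some $y_j$, $j \in J_i^+$, attaining the max on the left) or lowers $R_i$ strictly below $M$ (by simultaneously decreasing all $y_j$, $j \in J_i^-$, attaining the max on the right). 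Since at most one of $\bm b_i^\pm$ is nonzero, such a direction always exists; the perturbed point then violates $L_i \leq R_i$ and therefore lies outside $\val\bm\Pcal$, contradicting $y \in \interior\val\bm\Pcal$.

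Given any lift $\bm x \in \K_{\geq 0}^n$ with $\val\bm x = y \in \interior\val\bm\Pcal$, the strict tropical inequality then provides $\val\bm\ell_i(\bm x) < \val\bm r_i(\bm x)$ for each $i$. Since $\bm\ell_i(\bm x)$ and $\bm r_i(\bm x)$ are sums of nonnegative Puiseux series, their leading coefficients are nonnegative, and the strict gap of valuations forces $\bm r_i(\bm x) - \bm\ell_i(\bm x) > 0$ in $\K$. This means $\bm x$ satisfies every defining inequality of $\bm\Pcal$, so $\bm x \in \bm\Pcal$.

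The main obstacle I expect is the perturbation argument used to upgrade $\leq$ to $<$ at an interior point: one has to treat a few configurations depending on whether the max on the left is attained at a coefficient term or at the constant $\bm b_i^-$ (and likewise for the right), and verify that in each case an admissible direction of perturbation exists and can be taken small enough to remain inside the ball granted by the interior assumption. The remaining steps are essentially bookkeeping with the valuation/max-plus dictionary.
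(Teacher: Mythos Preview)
Your proposal is correct and takes a genuinely different route from the paper's proof.

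The paper proceeds geometrically: it picks $n+1$ points of $\val\bm\Pcal$ near $x$ (namely $x-\eps\mathbbl{1}$ and $x+\eps\mathbbl{e}^i$), lifts them to elements $\bm x^0,\dots,\bm x^n$ of $\bm\Pcal$, and then shows, by a direct determinant computation using the Leibniz formula, that any $\bm x$ with $\val\bm x = x$ lies in the simplex spanned by the $\bm x^i$. Your approach is instead algebraic and stays entirely within the valuation/max--plus dictionary: you rewrite each defining inequality so that both sides are sums of nonnegative series, observe that the induced tropical inequalities $L_i\le R_i$ hold on all of $\val\bm\Pcal$, and argue that at an interior point these inequalities must be strict (otherwise a small perturbation, which the interior assumption keeps inside $\val\bm\Pcal$, would violate one of them). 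The strict valuation gap then forces every lift to satisfy the original Puiseux inequalities.

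Both arguments are short; yours avoids the determinant bookkeeping, while the paper's gives the slightly stronger conclusion that $\bm x$ is actually a convex combination of explicit points of $\bm\Pcal$. Your case analysis for the perturbation step is sound: since $J_i^+$ and $J_i^-$ are disjoint and at most one of $\bm b_i^\pm$ is nonzero, either some $j\in J_i^+$ attains the left maximum (and raising that $y_j$ breaks the inequality without touching $R_i$), or the left maximum comes only from $\val\bm b_i^-$, forcing $\bm b_i^+=0$ and hence the right maximum to be attained at variables in $J_i^-$, which can be lowered. The only minor point to add when writing it up is the degenerate case $L_i(y)=-\infty$ (which, since $y\in\R^n$, means $\bm b_i^-=0$ and $J_i^+=\emptyset$, so the $i$-th inequality is trivially satisfied by any $\bm x\in\K_{\ge 0}^n$).
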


\begin{proof}
We denote by $\mathbbl{e}^i$ the $i$th element of the canonical basis of $\R^n$. Take $\eps > 0$ such that the points $x - \eps \mathbbl{1}$ and $x + \eps \mathbbl{e}^i$ for $i \in [n]$ belong to $\val \bm \Pcal$, and let $\bm x^0, \bm x^1, \dots, \bm x^n$ be some elements of $\bm \Pcal$ mapped by the valuation to these points respectively. We claim that any point $\bm x \in \K_{\geq 0}^n$ satisfying $\val \bm x = x$ belongs to the simplex generated by the points $\bm x^0, \dots, \bm x^n$, which will complete the proof. The latter simplex is defined by the $n+1$ inequalities of the form
\[
\det \begin{pmatrix}
1 & \cdots & 1 \\
\bm x^0 & \cdots & \bm x^n 	 
\end{pmatrix}
\det \begin{pmatrix}
1 & \cdots & 1 & 1 & 1 & \cdots & 1 \\
\bm x^0 & \dots & \bm x^{i-1} & \bm x & \bm x^{i+1} & \cdots & \bm x^n 
\end{pmatrix} \geq 0 \quad (i = 0, \dots, n)
\]
or, equivalently,
\begin{equation}\label{eq:ineq}
\det \begin{pmatrix}
t^\eps & 1 & \cdots & 1 \\
t^\eps \bm x^0 & \bm x^1 & \cdots & \bm x^n 	 
\end{pmatrix}
\det \begin{pmatrix}
t^\eps & 1 & \cdots & 1 & 1 & 1 & \cdots & 1 \\
t^\eps \bm x^0 & \bm x^1 & \dots & \bm x^{i-1} & \bm x & \bm x^{i+1} & \cdots & \bm x^n 
\end{pmatrix} \geq 0 \quad (i = 0, \dots, n) \, .
\end{equation}
Developing the determinant $\det \begin{pmatrix}
t^\eps & 1 & \cdots & 1 \\
t^\eps \bm x^0 & \bm x^1 & \cdots & \bm x^n
\end{pmatrix}$ (using the Leibniz formula) shows that the term $t^\eps \bm x^1_1 \cdots \bm x^n_n$ is the unique one with maximal valuation (equal to $x_1 + \dots + x_n + (n+1) \eps$), and thus this determinant is positive. We now consider $\bm x \in \K_{\geq 0}^n$ such that $\val \bm x = x$, and $i \in \{0, \dots, n\}$. In the determinant $\det \begin{pmatrix}
t^\eps & 1 & \cdots & 1 & 1 & 1 & \cdots & 1 \\
t^\eps\bm x^0 & \bm x^1 & \dots & \bm x^{i-1} & \bm x & \bm x^{i+1} & \cdots & \bm x^n 
\end{pmatrix}$, the term $t^\eps \bm x^1_1 \cdots \bm x^{i-1}_{i-1} \bm x_i \bm x^{i+1}_{i+1} \cdots \bm x^n_n$ has valuation $x_1 + \dots + x_n + n\eps$, while the valuation of the other terms is less than or equal to $x_1 + \dots + x_n + (n-1) \eps$. We deduce that this determinant is also positive, thus all the inequalities~\eqref{eq:ineq} are satisfied.
\end{proof}

\section{Proof of Theorem~\ref{th:main}}\label{sec:main}

We first review some useful properties related with the entropic barrier. When $\Pcal$ is a nonempty but possibly unbounded polyhedron, the domain of the function $f_{\Pcal}$ is the interior of $(\rec \Pcal)^\circ$, where $\rec \Pcal \coloneqq \{ z \in \R^n \colon \exists x , \;  x + \R_{\geq 0} z \subset \Pcal \}$ is the recession cone of $\Pcal$, and $(\rec \Pcal)^\circ$ is its polar cone, \ie, $(\rec \Pcal)^\circ = \{ y \in \R^n \colon \forall z \in \rec \Pcal , \; \scalar{y}{z} \leq 0 \}$. Note that $\theta \in \interior\bigl((\rec \Pcal)^\circ\bigr)$ if and only if the function $x \mapsto \scalar{\theta}{x}$ is bounded from above over $\Pcal$, and the set of points of $\Pcal$ maximizing this function is compact.

The following proposition shows that we can easily express the point of the central path in terms of the gradient of the function $f_\Pcal$:
\begin{proposition}\label{prop:entropic_cp}
Suppose that $\Pcal$ has nonempty interior and $-c \in \interior\bigl((\rec \Pcal)^\circ\bigr)$. For all $\mu > 0$, we have $\Ccal_{\Pcal, c}(\mu) = \nabla f_{\Pcal}(-\mu^{-1} c)$, \ie,
\[
\Ccal_{\Pcal,c}(\mu) = \Bigl(\int_{\Pcal} e^{- \scalar{\mu^{-1} c}{x}} dx\Bigr)^{-1} \int_{\Pcal} e^{- \scalar{\mu^{-1} c}{x}} x \, dx \, .
\]
\end{proposition}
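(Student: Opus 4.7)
The plan is to combine the first-order optimality condition for the central path with the standard Fenchel-conjugation identity $(\nabla f_\Pcal)^{-1} = \nabla f^*_\Pcal$ on the relative interiors of the respective domains. Since $\Ccal_{\Pcal,c}(\mu)$ is by definition the unique minimizer of the strictly convex function $x \mapsto \scalar{c}{x} + \mu f^*_\Pcal(x)$ over $\interior \Pcal$, writing the gradient condition yields
\[
\nabla f^*_\Pcal\bigl(\Ccal_{\Pcal,c}(\mu)\bigr) = -\mu^{-1} c.
\]
The proposition then reduces to showing that $\nabla f_\Pcal$ inverts $\nabla f^*_\Pcal$, which in turn requires verifying that $-\mu^{-1} c$ lies in $\interior\bigl((\rec \Pcal)^\circ\bigr) = \dom f_\Pcal$, and that the Legendre-type structure of $f_\Pcal$ applies.

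The membership $-\mu^{-1} c \in \interior\bigl((\rec \Pcal)^\circ\bigr)$ is immediate from the hypothesis that $-c$ belongs to this open cone and the fact that $\mu > 0$. By the remark preceding the proposition, this ensures that the exponential moment defining $f_\Pcal(-\mu^{-1} c)$ is finite, and moreover that the set of maximizers of $\scalar{-\mu^{-1} c}{\cdot}$ over $\Pcal$ is compact, which gives enough integrability to differentiate under the integral sign. A standard dominated-convergence argument (applied on a small open neighborhood of $-\mu^{-1} c$ inside $\interior((\rec\Pcal)^\circ)$) then yields
\[
\nabla f_\Pcal(\theta) = \Bigl(\int_\Pcal e^{\scalar{\theta}{x}} dx\Bigr)^{-1} \int_\Pcal e^{\scalar{\theta}{x}} x \, dx,
\]
so the explicit expression in the proposition will follow at $\theta = -\mu^{-1} c$ as soon as we can assert $\Ccal_{\Pcal,c}(\mu) = \nabla f_\Pcal(-\mu^{-1} c)$.

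For the latter, I would invoke the essentially smooth, essentially strictly convex (Legendre) property of $f_\Pcal$ on its domain: $f_\Pcal$ is real-analytic and strictly convex on $\interior((\rec\Pcal)^\circ)$ because its Hessian is the covariance matrix of the tilted probability measure $e^{\scalar{\theta}{x}} / \int_\Pcal e^{\scalar{\theta}{\cdot}}$ on $\Pcal$, which is positive definite whenever $\Pcal$ has nonempty interior. Its conjugate $f^*_\Pcal$ is therefore also of Legendre type on $\interior \Pcal$, and the gradients $\nabla f_\Pcal$ and $\nabla f^*_\Pcal$ are mutually inverse bijections between $\interior((\rec\Pcal)^\circ)$ and $\interior \Pcal$. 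Applying this to the optimality condition gives $\Ccal_{\Pcal,c}(\mu) = \nabla f_\Pcal(-\mu^{-1}c)$, and combining with the integral formula above completes the proof.

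The main obstacle, and essentially the only one, is making rigorous the Legendre-type duality $x = \nabla f_\Pcal(\theta) \iff \theta = \nabla f^*_\Pcal(x)$ in the possibly unbounded setting. In the original work of Bubeck and Eldan this was discussed for convex bodies; I would either cite the corresponding statement (or its extension to unbounded $\Pcal$ with $-c$ in the interior of the polar recession cone, where the exponential measure is still a well-defined probability measure with analytic log-Laplace transform), or prove it in a few lines by noting that the integrability hypothesis guarantees $f_\Pcal$ is finite and smooth on an open neighborhood of $-\mu^{-1}c$, strictly convex there, and its image gradient covers $\interior \Pcal$ as $\theta$ ranges over the domain — a classical fact for exponential families.
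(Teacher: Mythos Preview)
Your proposal is correct and follows essentially the same approach as the paper: write the first-order optimality condition $\nabla f^*_\Pcal(x^*) = -\mu^{-1} c$ and invert it via Legendre duality between $f_\Pcal$ and $f^*_\Pcal$. The paper differs only in emphasis---it explicitly verifies existence of the minimizer in the unbounded case via a Fenchel--Young lower bound (which you take for granted from the definition of $\Ccal_{\Pcal,c}(\mu)$), and it establishes the Legendre property on the $f^*_\Pcal$ side (barrier blow-up at the boundary) before citing \cite[Theorem~26.5]{Rockafellar70}, whereas you argue it on the $f_\Pcal$ side via the positive-definite covariance Hessian; these are equivalent routes to the same inversion.
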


\begin{proof}
As a self-concordant barrier, the function $f^*_\Pcal$ is differentiable and strictly convex. Its domain is $\interior \Pcal$, and $f^*_\Pcal(x) \to \infty$ when $x$ tends to the boundary of $\Pcal$. Besides, the function $x \mapsto \scalar{c}{x} + \mu f^*_\Pcal(x)$ is bounded from below over $\interior \Pcal$ by $-\mu f_\Pcal(-\mu^{-1} c)$ (Fenchel--Young inequality), which is finite since $-\mu^{-1} c \in \interior\bigl((\rec \Pcal)^\circ\bigr)$. We deduce that the optimization problem~\eqref{eq:central_path} has a unique optimal point $x^* \in \interior \Pcal$, given by $\nabla f^*_\Pcal(x^*) = -\mu^{-1} c$. Note that, as a proper closed convex function, $f_\Pcal$ is equal to the Fenchel conjugate of $f^*_\Pcal$. Moreover, since the domain of $f^*_\Pcal$ has nonempty interior and the norm of $\nabla f^*_\Pcal(x)$ diverges to $+\infty$ when the sequence $x$ tends to the boundary of $\Pcal$, the function $f^*_\Pcal$ is Legendre. By~\cite[Theorem~26.5]{Rockafellar70}, the function $x \mapsto \nabla f^*_\Pcal(x)$ is invertible, with inverse $\theta \mapsto \nabla f_\Pcal(\theta)$. We deduce that $x^* = \nabla f_\Pcal(-\mu^{-1} c)$.
\end{proof}

We now suppose that the polyhedron $\bm \Pcal \subset \K^n$ and the objective vector $\bm c \in \K^n$ satisfy the following requirements:
\begin{assumption}\label{assump}
\begin{asparaenum}[(i)]
\item\label{assump:i} $\bm \Pcal \subset \K_{\geq 0}^n$ and $0 \in \bm \Pcal$;
\item\label{assump:ii} the image of $\bm \Pcal$ under the valuation map is a regular set, \ie, it is equal to the closure of its interior;
\item\label{assump:iii} $\bm c \in (\K_{> 0})^n$.
\end{asparaenum}
\end{assumption}
Observe that by Proposition~\ref{prop:inclusion}, we have $\bm \Pcal(t) \subset \R_{\geq 0}^n$ provided that $t$ is large enough, as well as $0 \in \bm \Pcal(t)$ and $\bm c_i > 0$ for all $i \in [n]$. For such $t$, we have $-\bm c(t) \in \interior\bigl((\rec \bm \Pcal(t))^\circ\bigr)$, so that Proposition~\ref{prop:entropic_cp} applies. Note that $-\infty \in \val \bm \Pcal$, and $\val \bm c_i > -\infty$ for all $i \in [n]$. The latter property implies that $\{ x \in \val \bm \Pcal \colon \tscalar{\val \bm c}{x} \leq \alpha \}$ is a compact tropical polyhedron, and thus its tropical barycenter, which we denote by $x^*(\alpha)$, is well-defined.

From now on, we fix $\bm \mu \in \K_{> 0}$ and denote $\mu \coloneqq \val \bm \mu$. Theorem~\ref{th:main} is a straightforward consequence of Proposition~\ref{prop:entropic_cp} and the next two lemmas.
\begin{lemma}\label{lemma:volume}
\[
\lim_{t \to \infty} \log_t \int_{\bm \Pcal(t)} e^{-\scalar{\bm \mu(t)^{-1} \bm c(t)}{x}} d x = \sum_{i = 1}^n x^*_i(\mu) \, .
\]
\end{lemma}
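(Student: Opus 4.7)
The identity is a Laplace/Maslov-dequantization asymptotic: under the logarithmic change of variables $y_i = \log_t x_i$, the integrand $\exp(-\scalar{\bm\mu(t)^{-1}\bm c(t)}{x})$ concentrates as $t\to\infty$ on the preimage of the tropical polytope $\Tcal(\mu) \coloneqq \{y \in \val \bm\Pcal : \tscalar{\val \bm c}{y} \leq \mu\}$, and the logarithmic integral is dictated by the supremum of $\sum_i y_i$ over that set. Since the tropical barycenter $x^*(\mu)$ is the coordinatewise greatest element of $\Tcal(\mu)$, this supremum is exactly $\sum_i x^*_i(\mu)$. I establish matching upper and lower bounds on $\log_t$ of the integral.

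\textbf{Upper bound.} Fix $\eps > 0$ and pick $\bm\mu' \in \K_{>0}$ with $\val \bm\mu' = \mu + \eps$. Let $\bm B \coloneqq \{\bm x \in \bm \Pcal : \scalar{\bm c}{\bm x} \leq \bm\mu'\}$, a polyhedron over $\K$ that is bounded because $\bm c \in \K_{>0}^n$ and $\bm\Pcal \subset \K_{\geq 0}^n$. For each $i$, $\bm f_i \coloneqq \max_{\bm x \in \bm B} \bm x_i$ is a Puiseux series (Farkas/LP theory transfers to $\K$). Since the valuation of a sum of nonnegative series equals the max of the valuations, $\val \bm B \subset \Tcal(\mu+\eps)$, and by monotonicity of $\val$ on the nonnegative orthant, $\val \bm f_i = \max_{y \in \val \bm B} y_i \leq x^*_i(\mu+\eps)$. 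I split $\int_{\bm \Pcal(t)} = \int_{\bm B(t)} + \int_{\bm \Pcal(t) \setminus \bm B(t)}$: on $\bm B(t)$ the integrand is $\leq 1$ and $\vol \bm B(t) \leq \prod_i \bm f_i(t) \leq t^{\sum_i x^*_i(\mu+\eps) + o(1)}$, while on the complement the integrand is at most $e^{-t^\eps}$ and, via the crude envelope $e^{-\scalar{\bm\mu^{-1}\bm c}{x}/2} \cdot e^{-t^\eps/2}$ on $\R_{\geq 0}^n$, contributes a term that is negligible in $\log_t$-scale. Sending $\eps \to 0^+$ and using continuity of $\mu \mapsto x^*(\mu)$ (which follows from compactness and monotonicity of $\Tcal(\mu)$ in $\mu$) yields the bound $\sum_i x^*_i(\mu)$.

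\textbf{Lower bound.} Fix $\eps > 0$. Note that $\val\bm\Pcal$ is invariant under translation by $-s \mathbbl{1}$ for $s \geq 0$: this follows from tropical convexity and the fact that $-\infty \mathbbl{1} = \val 0 \in \val\bm\Pcal$ by Assumption~\ref{assump}(i). Combining this with the regularity of $\val\bm\Pcal$ (Assumption~\ref{assump}(ii)), I select $\tilde x \in \interior \val\bm\Pcal$ with $\tilde x_i \in [x^*_i(\mu) - \eps, x^*_i(\mu) - \eps/2]$ for every $i$. Then $\tscalar{\val \bm c}{\tilde x} \leq \tscalar{\val \bm c}{x^*(\mu)} - \eps/2 \leq \mu - \eps/2$. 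By Proposition~\ref{prop:lift_interior}, every Puiseux series vector $\bm x \in \K_{\geq 0}^n$ with $\val \bm x = \tilde x$ belongs to $\bm\Pcal$; applying this to suitable corner families shows that the box $B_t \coloneqq \prod_i [t^{\tilde x_i}, 2 t^{\tilde x_i}]$ lies in $\bm\Pcal(t)$ for $t \gg 1$. On $B_t$, $\scalar{\bm\mu(t)^{-1}\bm c(t)}{x} = O(t^{-\eps/2})$, so the integrand is at least $1/2$ for large $t$. Hence $\int_{\bm\Pcal(t)} e^{-\ldots} dx \geq \tfrac{1}{2} \vol B_t = \tfrac{1}{2} \prod_i t^{\tilde x_i}$, giving $\liminf_t \log_t \int \geq \sum_i \tilde x_i \geq \sum_i x^*_i(\mu) - n\eps$. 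Letting $\eps \to 0^+$ closes the gap.

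\textbf{Main obstacle.} The most delicate step is the construction of $\tilde x$ in the lower bound: placing an interior point of $\val\bm\Pcal$ just below $x^*(\mu)$ in every coordinate rests on combining regularity with the downward translation invariance, and would fail without Assumption~\ref{assump}(ii). In the upper bound, the key technical fact is the identification $\val \bm f_i = \max_{y \in \val \bm B} y_i$, which hinges on the validity of LP duality over $\K$ and the behavior of the valuation on nonnegative series; once this is in place, the exponential tail estimate on $\bm\Pcal(t)\setminus\bm B(t)$ is routine.
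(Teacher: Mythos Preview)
Your proof is correct and follows essentially the same approach as the paper's: both split the integral at the threshold $\scalar{\bm c(t)}{x}\le \bm\mu(t)t^\delta$ (your $\bm\mu'$), bound the bounded part by the volume of a coordinate box coming from the tropical barycenter $x^*(\mu+\delta)$ via Proposition~\ref{prop:inclusion}, and for the lower bound use tropical convexity, Assumption~\ref{assump}\eqref{assump:ii}, and Proposition~\ref{prop:lift_interior} to embed the box $[t^{\tilde x},2t^{\tilde x}]$ in $\bm\Pcal(t)$. The only cosmetic difference is the tail estimate: the paper uses a change of variable and the incomplete Gamma function to get $\mathrm{poly}(t^\delta)e^{-t^\delta}$, whereas your ``half-exponent'' trick $e^{-\scalar{\cdot}{\cdot}}\le e^{-t^\eps/2}e^{-\scalar{\cdot}{\cdot}/2}$ followed by integrating over $\R_{\ge 0}^n$ is a slightly quicker route to the same conclusion.
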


\begin{proof}
We introduce $\delta, \eps > 0$. We have:
\begin{equation}\label{eq:sum}
\int_{\bm \Pcal(t)} e^{-\scalar{\bm \mu(t)^{-1} \bm c(t)}{x}} d x  = 
\int_{\begin{subarray}{l}
\, x \in \bm \Pcal(t)\\
\scalar{\bm c(t)}{x} > \bm \mu(t) t^\delta
\end{subarray}}
e^{-\scalar{\bm \mu(t)^{-1} \bm c(t)}{x}} d x 
+
\int_{\begin{subarray}{l}
\, x \in \bm \Pcal(t)\\
\scalar{\bm c(t)}{x} \leq \bm \mu(t) t^\delta
\end{subarray}}
e^{-\scalar{\bm \mu(t)^{-1} \bm c(t)}{x}} d x \, .
\end{equation}
We first focus on the first term of the sum in~\eqref{eq:sum}. Since $\bm c_i(t) > 0$ for all $i \in [n]$ provided that $t \gg 1$, we can change the variable $x_n$ by $u = \scalar{\bm \mu(t)^{-1} \bm c(t)}{x}$, so that:
\begin{multline*}
\int_{\begin{subarray}{l}
\, x \in \bm \Pcal(t)\\
\scalar{\bm c(t)}{x} > \bm \mu(t) t^\delta
\end{subarray}}
e^{-\scalar{\bm \mu(t)^{-1} \bm c(t)}{x}} d x \\ 
\begin{aligned}[t]
& \leq
\frac{\bm \mu(t)}{\bm c_n(t)}\int_{t^\delta}^{\infty} 
\bigl(\vol\{(x_1, \dots, x_{n-1}) \in \R_{\geq 0}^{n-1} \colon 
\sum_{i = 1}^{n-1} \bm c_i(t) x_i \leq \bm \mu(t) u\}\bigr) e^{-u} \, du \\
& \leq \frac{\bm \mu(t)^n}{(n-1)! \, \bm c_1(t) \cdots \bm c_n(t)} \int_{t^\delta}^\infty u^{n-1} e^{-u} \, du 
\\
& 
\leq \frac{\bm \mu(t)^n}{(n-1)! \, \bm c_1(t) \cdots \bm c_n(t)} \Gamma(n,t^\delta) \, .
\end{aligned}
\end{multline*}
where $\Gamma$ is the upper incomplete Gamma function, defined by $\Gamma(k,z) \coloneqq \int_z^\infty u^{k-1} e^{-u} \, du$. Recall that $\Gamma(k,z) = z^{k-1} e^{-z} + (k-1) \Gamma(k-1,z)$ for all $k$, thus $\Gamma(n,z) = \text{poly}(z) e^{-z}$ where $\text{poly}(z)$ stands for some polynomial in $z$ (of degree $n-1$). We deduce that
\begin{equation}\label{eq:first_term}
\int_{\begin{subarray}{l}
\, x \in \bm \Pcal(t)\\
\scalar{\bm c(t)}{x} > \bm \mu(t) t^\delta
\end{subarray}}
e^{-\scalar{\bm \mu(t)^{-1} \bm c(t)}{x}} d x = O\Bigl(\frac{\bm \mu(t)^n}{\, \bm c_1(t) \cdots \bm c_n(t)} \text{poly}(t^\delta) e^{-t^\delta} \Bigr)\; .
\end{equation}

For the second term of the sum in~\eqref{eq:sum}, we remark that the image under the valuation map of the Puiseux polyhedron $\{ \bm x \in \bm \Pcal \colon \scalar{\bm c}{\bm x} \leq \bm \mu t^\delta \}$ is included in the tropical polytope $\{x \in \val \bm \Pcal \colon \tscalar{\val \bm c}{x} \leq \mu + \delta \}$. By definition of the tropical barycenter, for all $\bm x$ in the former polyhedron, we have $\val \bm x \leq x^*(\mu + \delta)$, hence $\bm x_i < t^{x^*_i(\mu + \delta) + \eps}$ for all $i \in [n]$. Then, Proposition~\ref{prop:inclusion} implies that, provided $t$ is large enough,
\begin{equation}\label{eq:inclusion}
\{ x \in \bm \Pcal(t) \colon \scalar{\bm c(t)}{x} \leq \bm \mu(t) t^\delta \} \subset \prod_{i = 1}^n [0, t^{x^*_i(\mu + \delta) + \eps}]
\end{equation}
Consequently, for $t \gg 1$, the second term of the sum in~\eqref{eq:sum} satisfies
\begin{equation}\label{eq:second_term}
\int_{\begin{subarray}{l}
\, x \in \bm \Pcal(t)\\
\scalar{\bm c(t)}{x} \leq \bm \mu(t) t^\delta
\end{subarray}} 
e^{-\scalar{\bm \mu(t)^{-1} \bm c(t)}{x}} d x 
\leq \vol 
\{ x \in \bm \Pcal(t) \colon \scalar{\bm c(t)}{x} \leq \bm \mu(t) t^\delta \} \leq 
 t^{\sum_{i = 1}^n x^*_i(\mu + \delta) + n \eps} \, .
\end{equation}
As $e^{-t^\delta} \mathrel{=}_{t \to +\infty} o(t^K)$ for all $K \in \R$, the asymptotics given in~\eqref{eq:first_term} of the first term of the sum in~\eqref{eq:sum} is negligible w.r.t.~that of the the second term. Therefore, we have
\begin{equation}\label{eq:upper}
\limsup_{t \to \infty} \; \log_t 
\int_{\bm \Pcal(t)} e^{-\scalar{\bm \mu(t)^{-1} \bm c(t)}{x}} d x \leq 
\sum_{i = 1}^n x^*_i(\mu + \delta) + n \eps \, .
\end{equation}

We now establish a lower bound. Since $\val \bm \Pcal$ is a tropical convex set that contains $-\infty$, the point $x^*(\mu) - \eps \mathbbl{1}$ belongs to $\val \bm \Pcal$. As $\val \bm \Pcal$ is regular, we can find $\tilde x \in \interior \val \bm \Pcal$ such that $\|\tilde x - (x^*(\mu) - \eps \mathbbl{1})\|_\infty \leq \eps$. Thus, $x^*(\mu) - 2\eps \mathbbl{1} \leq \tilde x \leq x^*(\mu)$. By Proposition~\ref{prop:lift_interior}, the parallelotope $t^{\tilde x} \leq \bm x \leq 2t^{\tilde x}$ is included in $\bm \Pcal$. Therefore, by Proposition~\ref{prop:inclusion}, for all $t \gg 1$, we have
\begin{equation}\label{eq:inclusion2}
\{ x \in \R^n \colon t^{\tilde x} \leq x \leq 2t^{\tilde x} \} \subset \bm \Pcal(t) \, .
\end{equation}
In consequence, 
\[
\log_t \int_{\bm \Pcal(t)} e^{-\scalar{\bm \mu(t)^{-1} \bm c(t)}{x}} d x
\geq \log_t \int_{\begin{subarray}{l}
\, x \in \R^n \\
t^{\tilde x} \leq x \leq 2t^{\tilde x}
\end{subarray}} e^{-\scalar{\bm \mu(t)^{-1} \bm c(t)}{x}} d x 
\geq - 2 \frac{\scalar{\bm \mu(t)^{-1} \bm c(t)}{t^{\tilde x}}}{\log t} + \sum_{i = 1}^n \tilde x_i
\]
Note that $\val \scalar{\bm \mu^{-1} \bm c}{t^{\tilde x}} = \tscalar{\val \bm c}{\tilde x} - \mu \leq \tscalar{\val \bm c}{x^*} - \mu \leq 0$. Therefore, $\scalar{\bm \mu(t)^{-1} \bm c(t)}{t^{\tilde x}} \mathrel{=}_{t \to \infty} O(1)$. We deduce that 
\begin{equation}\label{eq:lower}
\liminf_{t \to \infty} \; \log_t \int_{\bm \Pcal(t)} e^{-\scalar{\bm \mu(t)^{-1} \bm c(t)}{x}} d x \geq \sum_{i = 1}^n x^*_i(\mu) - 2 n \eps \, .
\end{equation}
The map $\alpha \mapsto x^*(\alpha)$ is continuous by~\cite[Corollary~17]{ABGJ18}. Passing to the limit $\delta, \eps \to 0^+$ in~\eqref{eq:upper} and~\eqref{eq:lower} shows that the limit of $\log_t \int_{\bm \Pcal(t)} e^{-\scalar{\bm \mu(t)^{-1} \bm c(t)}{x}} d x$ exists and is equal to $\sum_{i = 1}^n x^*_i(\mu)$.
\end{proof}

\begin{lemma}\label{lemma:mean}
\[
\lim_{t \to \infty} \log_t \int_{\bm \Pcal(t)} e^{-\scalar{\bm \mu(t)^{-1} \bm c(t)}{x}} x \, d x = x^* (\mu) + \Bigl(\sum_{i = 1}^n x^*_i (\mu)\Bigr) \mathbbl{1} \, .
\]
\end{lemma}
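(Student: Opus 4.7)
The plan is to adapt the proof of Lemma~\ref{lemma:volume}, which is essentially the vector-valued analogue of this statement (indeed, when we split off the factor $x$, we must analyze one extra power of the integration variable). The argument is entrywise: we fix $j \in [n]$ and show
\[
\lim_{t \to \infty} \log_t \int_{\bm \Pcal(t)} e^{-\scalar{\bm \mu(t)^{-1} \bm c(t)}{x}} x_j \, dx = x^*_j(\mu) + \sum_{i=1}^n x^*_i(\mu) \, .
\]
As before, I would split the integral along $\scalar{\bm c(t)}{x} > \bm \mu(t) t^\delta$ versus $\scalar{\bm c(t)}{x} \leq \bm \mu(t) t^\delta$, for some $\delta, \eps > 0$, and establish matching asymptotic upper and lower bounds that will be tightened by letting $\delta, \eps \to 0^+$ and using continuity of $\alpha \mapsto x^*(\alpha)$.

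For the upper bound on the \emph{tail} region $\scalar{\bm c(t)}{x} > \bm \mu(t) t^\delta$, the same change of variable $u = \scalar{\bm \mu(t)^{-1} \bm c(t)}{x}$ as in Lemma~\ref{lemma:volume} applies, with the integrand carrying an extra linear factor $x_j \leq \bm \mu(t) u / \bm c_j(t)$. This only multiplies the estimate~\eqref{eq:first_term} by a factor whose valuation is finite and by an additional polynomial factor in $u$ (of degree one higher), so the tail remains of the form $O(\text{poly}(t^\delta) e^{-t^\delta})$ times a factor of polynomial growth in $t$, which is negligible. For the \emph{bulk} region $\scalar{\bm c(t)}{x} \leq \bm \mu(t) t^\delta$, the inclusion~\eqref{eq:inclusion} implies that $x_j \leq t^{x^*_j(\mu+\delta) + \eps}$ on the domain of integration, so that
\[
\int_{\begin{subarray}{l}\,x \in \bm \Pcal(t)\\ \scalar{\bm c(t)}{x} \leq \bm \mu(t) t^\delta\end{subarray}} e^{-\scalar{\bm \mu(t)^{-1} \bm c(t)}{x}} x_j \, dx \leq t^{x^*_j(\mu+\delta) + \eps} \cdot t^{\sum_i x^*_i(\mu+\delta) + n\eps} \, ,
\]
by bounding $e^{-\scalar{\bm\mu(t)^{-1}\bm c(t)}{x}} \leq 1$ and reusing estimate~\eqref{eq:second_term}. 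Taking $\log_t$ and then $\limsup_t$ yields $\limsup \leq x^*_j(\mu+\delta) + \sum_i x^*_i(\mu+\delta) + (n+1)\eps$.

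For the lower bound, I would use the same lift $\tilde x \in \interior \val \bm \Pcal$ with $x^*(\mu) - 2\eps\mathbbl{1} \leq \tilde x \leq x^*(\mu)$ as in Lemma~\ref{lemma:volume}, and the parallelotope inclusion~\eqref{eq:inclusion2}. On this parallelotope one has $x_j \geq t^{\tilde x_j}$, so
\[
\int_{\bm \Pcal(t)} e^{-\scalar{\bm \mu(t)^{-1} \bm c(t)}{x}} x_j \, dx \geq t^{\tilde x_j} \int_{t^{\tilde x} \leq x \leq 2 t^{\tilde x}} e^{-\scalar{\bm \mu(t)^{-1} \bm c(t)}{x}} \, dx \, .
\]
The latter integral was already lower bounded in the proof of Lemma~\ref{lemma:volume} by an expression whose $\log_t$ exceeds $\sum_i \tilde x_i - O(1/\log t)$, so we obtain $\liminf_t \log_t \int \cdots x_j \, dx \geq \tilde x_j + \sum_i \tilde x_i - O(1/\log t) \geq x^*_j(\mu) + \sum_i x^*_i(\mu) - 2(n+1)\eps$. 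Letting $\delta, \eps \to 0^+$ and invoking continuity of $x^*$ as in Lemma~\ref{lemma:volume} closes the gap and yields the claimed limit entrywise.

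I do not expect any serious obstacle: the argument is purely a bookkeeping extension of Lemma~\ref{lemma:volume}, the only new ingredient being the uniform upper bound $x_j \leq t^{x^*_j(\mu+\delta) + \eps}$ on the bulk region (from~\eqref{eq:inclusion}) and the matching lower bound $x_j \geq t^{\tilde x_j}$ on the parallelotope (from~\eqref{eq:inclusion2}).
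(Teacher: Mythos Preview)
The proposal is correct and follows essentially the same approach as the paper's own proof: split along $\scalar{\bm c(t)}{x} \lessgtr \bm \mu(t) t^\delta$, bound the tail via the change of variable and the extra linear factor $x_j \leq \bm \mu(t) u / \bm c_j(t)$ (yielding an incomplete Gamma term of one higher order), bound the bulk via~\eqref{eq:inclusion}, and lower bound via the parallelotope~\eqref{eq:inclusion2} together with $x_j \geq t^{\tilde x_j}$. The resulting $\limsup$ and $\liminf$ bounds match the paper's~\eqref{eq:upper2} and~\eqref{eq:lower2} exactly.
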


\begin{proof}
The approach is very akin to the proof of Lemma~\ref{lemma:volume}. We fix $j \in [n]$ and introduce $\delta, \eps > 0$. We have:
\[
\int_{\bm \Pcal(t)} e^{-\scalar{\bm \mu(t)^{-1} \bm c(t)}{x}} x_j \, d x  = \\
\int_{\begin{subarray}{l}
\, x_i \in \bm \Pcal(t)\\
\scalar{\bm c(t)}{x} > \bm \mu(t) t^\delta
\end{subarray}}
e^{-\scalar{\bm \mu(t)^{-1} \bm c(t)}{x}} x_j \, d x 
+
\int _{\begin{subarray}{l}
\, x \in \bm \Pcal(t)\\
\scalar{\bm c(t)}{x} \leq \bm \mu(t) t^\delta
\end{subarray}}
e^{-\scalar{\bm \mu(t)^{-1} \bm c(t)}{x}} x_j \, d x \, . 
\]
Using the same change of the variable as before, we have:
\[
\int_{\begin{subarray}{l}
\, x \in \bm \Pcal(t)\\
\scalar{\bm c(t)}{x} > \bm \mu(t) t^\delta
\end{subarray}}
e^{-\scalar{\bm \mu(t)^{-1} \bm c(t)}{x}} x_j \, d x 
\leq \frac{\bm \mu(t)^{n+1}}{n! \, \bm c_j(t) \, \bm c_1(t) \cdots \bm c_n(t)} \Gamma(n+1,t^\delta) \, .
\]
Besides, the inclusion~\eqref{eq:inclusion} for $t \gg 1$ provides
\[
\int_{\begin{subarray}{l}
\, x \in \bm \Pcal(t)\\
\scalar{\bm c(t)}{x} \leq \bm \mu(t) t^\delta
\end{subarray}} 
e^{-\scalar{\bm \mu(t)^{-1} \bm c(t)}{x}} x_j \, d x 
\leq t^{x^*_j(\mu + \delta) + \eps + \sum_{i = 1}^n x^*_i(\mu + \delta) + n \eps} \, .
\]
We deduce that 
\begin{equation}\label{eq:upper2}
\limsup_{t \to \infty} \; \log_t 
\int_{\bm \Pcal(t)} e^{-\scalar{\bm \mu(t)^{-1} \bm c(t)}{x}} x_j \, d x \leq x^*_j(\mu + \delta) + \sum_{i = 1}^n x^*_i(\mu + \delta) + (n+1) \eps \, .
\end{equation}

For the converse inequality, we introduce $\tilde x$ as in the proof of Lemma~\ref{lemma:volume}, so that the inclusion~\eqref{eq:inclusion2} holds for $t \gg 1$. We have
\[
\log_t \int_{\bm \Pcal(t)} e^{-\scalar{\bm \mu(t)^{-1} \bm c(t)}{x}} x_j \, d x \geq \tilde x_j - 2 \frac{\scalar{\bm \mu(t)^{-1} \bm c(t)}{t^{\tilde x}}}{\log t} + \sum_{i = 1}^n \tilde x_i\, ,
\]
and so
\begin{equation}\label{eq:lower2}
\liminf_{t \to \infty} \; \log_t \int_{\bm \Pcal(t)} e^{-\scalar{\bm \mu(t)^{-1} \bm c(t)}{x}} x_j \, d x \geq x^*_j(\mu) - 2(n+1)\eps + \sum_{i = 1}^n x^*_i(\mu)\, .
\end{equation}
Passing to the limit $\delta, \eps \to 0^+$ in~\eqref{eq:upper2} and~\eqref{eq:lower2} shows that the limit of $\log_t \int_{\bm \Pcal(t)} e^{-\scalar{\bm \mu(t)^{-1} \bm c(t)}{x}} x_j \, d x$ exists and is equal to $x^*_j(\mu) + \sum_{i = 1}^n x^*_i(\mu)$.
\end{proof}

We remark that the quantity $\sum_{i = 1}^n x_i^*(\mu)$ which appears in Lemmas~\ref{lemma:volume} and~\ref{lemma:mean} corresponds to the \emph{tropical barycentric volume}, introduced by Loho and Schymura in~\cite{LohoSchymura20}, of the tropical polytope $\{ x \in \val \bm \Pcal \colon \tscalar{\val \bm c}{x} \leq \alpha \}$. We refer to~\cite{LohoSchymura20} for further properties on the tropical barycentric volume and its relation with the Euclidean volume.

\section{Tropical entropic central path with exponentially many linear pieces}\label{sec:cor}

Following Theorem~\ref{th:main}, the limit in~\eqref{eq:limit} only depends on $\mu = \val \bm \mu$, whence we call the function which maps $\mu$ to this limit the \emph{tropical entropic central path}. The same property holds when replacing the entropic barrier by the logarithmic one, which gives rise to a function that we call the \emph{tropical logarithmic central path} to avoid confusion. The latter has been studied in~\cite{ABGJ18} in a primal-dual setting with slack variables. This means that every point of the tropical logarithmic central path is a quadruple $(x,w,s,y)$ where $x$ and $y$ are respectively the primal and dual variables, and $w$ and $s$ are the associated slack variables.
\begin{corollary}\label{cor:same_path}
Let $\bm \Pcal \subset \K^n$ and $\bm c \in \K^n$ satisfying Assumption~\ref{assump}. Then, the tropical entropic central path is a piecewise linear function which coincides with the projection on the $x$-coordinates of the tropical logarithmic central path.
\end{corollary}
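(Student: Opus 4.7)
My plan is to derive this corollary by reading off the two central paths from their now-available characterizations and checking that they match. Theorem~\ref{th:main} already provides the entropic side: the tropical entropic central path sends $\mu = \val \bm\mu$ to the tropical barycenter $x^*(\mu)$ of the tropical polytope $T(\mu) \coloneqq \{x \in \val \bm\Pcal : \tscalar{\val \bm c}{x} \leq \mu\}$. For the logarithmic side, I would invoke the corresponding result of~\cite{ABGJ18}, in which the primal component of the tropical logarithmic central path at parameter $\mu$ is shown to coincide with the greatest element (for the entrywise tropical order) of exactly this tropical polytope $T(\mu)$, which is by definition $x^*(\mu)$. Once both paths are displayed as $\mu \mapsto x^*(\mu)$, coincidence is immediate.

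The piecewise linear character then does not require a new argument: the map $\mu \mapsto x^*(\mu)$ has already been analyzed in~\cite{ABGJ18}, in particular via~\cite[Corollary~17]{ABGJ18} which was used inside the proof of Lemma~\ref{lemma:volume}. The structural reason is that $T(\mu)$ depends on $\mu$ only through the right-hand side of a single tropical inequality, and the tropical barycenter of such a one-parameter family of tropical polytopes is a continuous, piecewise linear function of the parameter. Combining this with the previous paragraph yields both assertions of the corollary.

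The principal obstacle I anticipate is bibliographic bookkeeping rather than a genuinely new argument. The tropical logarithmic central path in~\cite{ABGJ18} is studied in a primal-dual-slack setting, where a point is a quadruple $(x,w,s,y)$, whereas Assumption~\ref{assump} and Theorem~\ref{th:main} operate in a purely primal setting with $\bm\Pcal \subset \K_{\geq 0}^n$ and $\bm c \in (\K_{>0})^n$. I would therefore need to verify carefully that projecting out $w$, $s$ and $y$ from the description given in~\cite{ABGJ18} recovers, under Assumption~\ref{assump}, precisely the tropical barycenter $x^*(\mu)$ of $T(\mu)$. Once that translation is made explicit, nothing else is required.
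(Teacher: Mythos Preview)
Your plan is correct and matches the paper's approach exactly: the paper carries out precisely the translation you flag as the main obstacle, by verifying the strict-feasibility hypotheses of \cite[Theorem~15]{ABGJ18}, observing that under Assumption~\ref{assump} (specifically $0 \in \bm\Pcal$ and $\bm c \geq 0$) the dual optimum can be taken as $(\bm s^*, \bm y^*) = (\bm c, 0)$ so that the sublevel constraint $\tscalar{s^*}{x} \vee \tscalar{y^*}{w} \leq \mu$ collapses to $\tscalar{\val \bm c}{x} \leq \mu$, and then projecting out $w$. One minor correction: piecewise linearity is drawn from \cite[Proposition~16]{ABGJ18} rather than \cite[Corollary~17]{ABGJ18}, the latter giving only continuity.
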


\begin{proof}
We introduce $\bm A \in \K^{m \times n}$ and $\bm b \in \K^m$ such that $\bm \Pcal = \{ \bm x \in \K_{\geq 0}^n \colon \bm A \bm x \leq \bm b\}$. We define $\overline{\bm \Pcal} \coloneqq \{ (\bm x, \bm w) \in \K_{\geq 0}^{n + m} \colon \bm A \bm x + \bm w = \bm b \}$ and $\overline{\bm \Qcal} \coloneqq \{ (\bm s, \bm y) \in \K_{\geq 0}^{n + m} \colon \bm s - \transpose{\bm A} \bm y = \bm c \}$ the primal and dual feasible sets (with slack variables). We apply the characterization of the tropical logarithmic central path given by~\cite[Theorem~15]{ABGJ18}. The latter requires that $\overline{\bm \Pcal}$ and $\overline{\bm \Qcal}$ contain points with only positive entries. Up to assuming that no row of $\bm A$ is equal to $0$, such a point of $\overline{\bm \Pcal}$ can be built by taking $\bm x \in \bm \Pcal$ such that $\val \bm x \in \interior \val \bm \Pcal$ (which is possible as $\val \bm \Pcal$ has nonempty interior by Assumption~\ref{assump}), and $\bm w \coloneqq \bm b - \bm A \bm x$. Indeed, the point $\bm x$ must satisfy $\bm A_i \bm x < \bm b_i$ and $\bm x_j > 0$ for all $(i,j) \in [m] \times [n]$, as a consequence of Proposition~\ref{prop:lift_interior}. Similarly, we exhibit a point with positive entries in $\overline{\bm \Qcal}$ by choosing $\bm y \coloneqq t^\eta$ where $\eta < \min_{ij} (\val \bm c_j - \val \bm A_{ij})$, so that $\val (\transpose{\bm A} \bm y)_j < \val(\bm c_j)$ for all $j \in [n]$, and the point $\bm s \coloneqq \bm c - \transpose{\bm A}\bm y$ has positive entries. 

Combining~\cite[Theorem~15]{ABGJ18} and~\cite[Proposition~14~(i)]{ABGJ18} shows that the primal part (\ie, the projection on the $(x,w)$ coordinates) of the point of tropical logarithmic central path with parameter $\mu$ is given by the tropical barycenter of the tropical polytope $\{(x,w) \in \val \overline{\bm \Pcal} \colon \tscalar{s^*}{x} \vee \tscalar{y^*}{w} \leq \mu \}$, where $(s^*, y^*)$ is the valuation of an (arbitrary) optimal solution of the following linear program over Puiseux series:
\[
\begin{array}{r@{\quad}l}
\text{minimize} & \scalar{\bm b}{\bm y} \\[\jot]
\text{subject to} & \bm s - \transpose{\bm A} \bm y = \bm c \, \\[\jot]
& (\bm s, \bm y) \in \K_{\geq 0}^{n + m} \, .
\end{array}
\]
As $\bm b \geq 0$ (recall that $0 \in \bm \Pcal$ by Assumption~\ref{assump}) and $\bm c \geq 0$, we can take $\bm s^* = \bm c$ and $\bm y^* = 0$, so that $\tscalar{s^*}{x} \vee \tscalar{y^*}{w} = \tscalar{\val \bm c}{x}$ for all $(x,w)$. Moreover, $\val \bm \Pcal$ is the projection on the $x$-coordinates of $\val \overline{\bm \Pcal}$. We deduce that the $x$-projection of the point of the  tropical logarithmic central path with parameter $\mu$ is the tropical barycenter of $\{ x \in \val \bm \Pcal \colon \tscalar{\val \bm c}{x} \leq \mu\}$, \ie~the point of the tropical entropic central path of parameter $\mu$ by Theorem~\ref{th:main}. The fact that the tropical entropic central path is a piecewise linear function then follows from~\cite[Proposition~16]{ABGJ18}.
\end{proof}

In~\cite{ABGJ18}, a pathological instance of linear program for which log-barrier interior point methods must perform an exponential number of iterations (w.r.t.~the dimension and the number of inequalities) is exhibited. The proof relies on the fact that the corresponding tropical logarithmic central path has exponentially many linear pieces. We exploit Corollary~\ref{cor:same_path} to extend this result to the tropical entropic central path. By comparison with~\cite{ABGJ18}, the objective function of the instance is modified to make sure that Assumption~\ref{assump}~\eqref{assump:iii} holds.

\begin{corollary}\label{cor:LW}
Consider the following parametric family of linear programs
\begin{equation}
\begin{array}{r@{\quad}l}
\text{minimize} & x_1 + t^{-1} x_2 + \sum_{j = 1}^{r-1} t^{-(j+1)} (x_{2j+1} + x_{2j+2}) \\[\jot]
\text{subject to} & 
x_1 \leq t^2 \, , \ \enspace x_2 \leq t, \\[\jot]
& x_{2j+1} \leq t \, x_{2j-1} \, , \enspace x_{2j+1} \leq t \, x_{2j} \tikzmark{} \\[\jot]
& x_{2j+2} \leq t^{1-1/2^j} (x_{2j-1} + x_{2j}) \tikzmark{} \\[\jot]
& x_{2r-1} \geq 0 \, , \enspace x_{2r} \geq 0 \, 
\end{array}
\insertbigbrace{$1 \leq j < r,$} \label{eq:LW}
\end{equation}
defined by $3r+1$ inequalities in dimension $2r$. The number of linear pieces in the subset of the tropical entropic central path with parameter $\mu \in [0,2]$ is equal to $2^{r-1}$.
\end{corollary}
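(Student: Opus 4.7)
The plan is to verify that Assumption~\ref{assump} holds for the LP in~\eqref{eq:LW}, apply Theorem~\ref{th:main} to express each point of the tropical entropic central path as a tropical barycenter, and then show that this barycenter coincides with the one arising from the analysis of the original pathological instance in~\cite{ABGJ18}.

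First, Assumption~\ref{assump} must be checked. Nonnegativity of $\bm\Pcal$ (item~\eqref{assump:i}) follows by downward induction: from $x_{2r-1}, x_{2r} \geq 0$ and the chains $x_{2j+1} \leq t x_{2j-1}$, $x_{2j+1} \leq t x_{2j}$ one obtains $x_i \geq 0$ for every $i$, and $0$ is obviously feasible. Item~\eqref{assump:iii} is immediate, since each coefficient of the objective is a positive power of $t$. For the regularity of $\val\bm\Pcal$ (item~\eqref{assump:ii}), one checks that the tropicalized polyhedron is full-dimensional by exhibiting an interior point obtained by slightly slackening each tropicalized inequality; the closure of the interior then agrees with $\val\bm\Pcal$ itself.

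Next, by Theorem~\ref{th:main}, the tropical entropic central path at parameter $\mu$ is the tropical barycenter of
\[
T_\mu \coloneqq \bigl\{x \in \val\bm\Pcal : \tscalar{\val\bm c}{x} \leq \mu\bigr\},
\]
and the valuations of the objective coefficients give
\[
\tscalar{\val\bm c}{x} = \max\bigl(x_1,\ x_2 - 1,\ \max_{1 \leq j \leq r-1}(x_{2j+1} - (j+1),\ x_{2j+2} - (j+1))\bigr).
\]
The crucial step is to show that for $\mu \in [0,2]$ one has $T_\mu = \{x \in \val\bm\Pcal : x_1 \leq \mu\}$. One inclusion is immediate. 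For the other, I tropicalize the primal constraints of~\eqref{eq:LW} into $x_2 \leq 1$, $x_{2j+1} \leq 1 + \min(x_{2j-1}, x_{2j})$ and $x_{2j+2} \leq (1 - 2^{-j}) + \max(x_{2j-1}, x_{2j})$, and then prove by induction on $j \geq 1$ that $x_{2j+1}, x_{2j+2} \leq \mu + j + 1$. The base case $j=1$ uses $x_1 \leq \mu$ and $x_2 \leq 1 \leq \mu + 1$; the inductive step yields $x_{2j+1} \leq 1 + (\mu + j) = \mu + j + 1$ and $x_{2j+2} \leq (1 - 2^{-j}) + (\mu + j) < \mu + j + 1$. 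Consequently each term of the tropical inner product other than $x_1$ is bounded above by $\mu$.

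It follows that, for $\mu \in [0,2]$, the set $T_\mu$ coincides with the sublevel set used in~\cite{ABGJ18} to describe the tropical logarithmic central path of the original pathological instance (whose objective tropically reduces to $x_1$). The tropical entropic central path computed here thus agrees on $[0,2]$ with the $x$-projection of that tropical logarithmic central path, and the count of $2^{r-1}$ linear pieces transfers directly. The main obstacle is the redundancy verification in the previous step: the exponents $-(j+1)$ in the modified objective are engineered precisely so that the additional objective contributions remain dominated by $x_1$ throughout the interval $\mu \in [0,2]$, which is what the inductive bound confirms.
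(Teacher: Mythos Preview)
Your proof is correct and follows the same overall strategy as the paper: verify Assumption~\ref{assump}, invoke Theorem~\ref{th:main} to identify the tropical entropic central path with the barycenter of $T_\mu = \{x \in \val\bm\Pcal : \tscalar{\val\bm c}{x} \leq \mu\}$, reduce this to the barycenter of $\{x \in \val\bm\Pcal : x_1 \leq \mu\}$, and cite~\cite{ABGJ18} for the $2^{r-1}$ piece count.

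The one substantive difference is in how the reduction is carried out. The paper does not prove the set equality $T_\mu = \{x \in \val\bm\Pcal : x_1 \leq \mu\}$; instead it imports the explicit barycenter $\bar x(\mu)$ of the latter set from~\cite[Proposition~20]{ABGJ18} together with the tabulated values at the nondifferentiability points, and checks directly that $\val\bm c_i + \bar x_i(\mu) \leq \mu$ for every $i>1$, so that $\bar x(\mu) \in T_\mu$; since $T_\mu$ is contained in the larger set, the two barycenters coincide. Your argument establishes the stronger set-level equality by the inductive bound $x_{2j+1}, x_{2j+2} \leq \mu + j + 1$ on \emph{all} points of $\val\bm\Pcal$ with $x_1 \leq \mu$, which is more self-contained and avoids appealing to the explicit barycenter formula at this stage. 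Both routes are valid; yours is slightly more elementary, while the paper's makes the dependence on~\cite{ABGJ18} more explicit (and indeed the paper simply cites~\cite[Section~4.3]{ABGJ18} for the regularity of $\val\bm\Pcal$, where your sketch of item~\eqref{assump:ii} is a bit thin---full-dimensionality alone does not yield regularity).
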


\begin{proof}
Let $\bm \Pcal$ and $\bm c$ be the polyhedron and objective vector over Puiseux series associated with the parametric family of linear programs~\eqref{eq:LW}. Assumptions~\ref{assump}~\eqref{assump:i} and~\eqref{assump:iii} are trivially satisfied. As shown in~\cite[Section~4.3]{ABGJ18}, $\val \bm \Pcal$ is defined as the set of $x \in \T^n$ satisfying the following inequalities:
\begin{equation}
\begin{lgathered}
x_1 \leq 2 \, , \enspace x_2 \leq 1 \, , \\
x_{2j+1} \leq 1 + x_{2j-1} \, , \enspace  x_{2j+1} \leq 1 + x_{2j} \tikzmark{} \\
x_{2j+2} \leq (1-1/2^j) + \max(x_{2j-1}, x_{2j}) \tikzmark{}
\end{lgathered}
\insertbigbrace{$1 \leq j < r$,}
\end{equation}
and this set is regular, thus Assumption~\ref{assump}~\eqref{assump:ii} is satisfied. In~\cite [Proposition~20]{ABGJ18}, it is shown that the tropical barycenter of $\{x \in \val \bm \Pcal \colon x_1 \leq \mu\}$ is the point $\bar x(\mu)$ given by the following recursive equations:
\begin{align*}
\bar x_1(\mu) & = \min(\mu,2) \, , \\
\bar x_2(\mu) & = 1 \, , \\
\bar x_{2j+1}(\mu) & = 1 + \min(\bar x_{2j-1}(\mu), \bar x_{2j}(\mu)) \tikzmark{} \\
\bar x_{2j+2}(\mu) & = (1-1/2^j) + \max(\bar x_{2j-1}(\mu), \bar x_{2j}(\mu))  \tikzmark{} 
\insertbigbrace{$1 \leq j < r$.}
\end{align*}
The map $\mu \mapsto \bar x (\mu)$ is piecewise linear, and we  report from~\cite{ABGJ18} the values taken by the nondifferentiability points between $0$ and $2$ in Table~\ref{tab:values}. It is immediate to check that for all $\mu \in [0,2]$ and $i > 1$, $\val \bm c_i + \bar x_i(\mu) \leq \mu$. We deduce that for all such $\mu$, the tropical barycenter of $\{ x \in \val \bm \Pcal \colon x_1 \leq \mu\}$ belongs to $\{ x \in \val \bm \Pcal \colon \tscalar{\val \bm c}{x} \leq \mu\}$, and thus 
$x^*(\mu) = \bar x(\mu)$ as the latter set is included in the former set. The statement immediately follows from Table~\ref{tab:values}.
\end{proof}

\begin{table}
\caption{Nondifferentiability points and values of the maps $\mu \mapsto (\bar x_{2j+1}(\mu), \bar x_{2j+2}(\mu))$ where $1 \leq j < r$ and $k = 0, 1, \dots, 2^{j-1}-1$.}\label{tab:values}
\centering
$\renewcommand{\arraystretch}{1.75}%
\begin{array}{c@{\quad}c@{\quad}c@{\quad}c}
\toprule
\mu & \frac{2k}{2^{j-1}} & \frac{2k+1}{2^{j-1}} & \frac{2(k+1)}{2^{j-1}}  \\
\midrule
\bar x_{2j+1}(\mu) & j + \frac{2k}{2^j} & j + \frac{2k+2}{2^j} & j + \frac{2k+2}{2^j} \\
\bar x_{2j+2}(\mu) & j + \frac{2k+1}{2^j} & j + \frac{2k+1}{2^j} & j + \frac{2k+3}{2^j} \\
\bottomrule
\end{array}$
\end{table}

\bibliographystyle{alpha}
\bibliography{entropic_barrier}
\end{document}